
\documentclass[12pt]{amsart}%
\usepackage{amssymb,amsmath,amsfonts,latexsym,amsthm,geometry}
\usepackage{amsmath}
\usepackage{amsfonts}
\usepackage{amssymb}
\usepackage{graphicx}%
\setcounter{MaxMatrixCols}{30}
\providecommand{\U}[1]{\protect\rule{.1in}{.1in}}
\providecommand{\U}[1]{\protect\rule{.1in}{.1in}}
\providecommand{\U}[1]{\protect\rule{.1in}{.1in}}
\providecommand{\U}[1]{\protect\rule{.1in}{.1in}}
\geometry{left=3cm,right=3cm,top=3cm,bottom=2.5cm,headheight=2.5mm}

\newtheorem{theorem}{Theorem}[section]
\newtheorem{corollary}[theorem]{Corollary}
\newtheorem{proposition}[theorem]{Proposition}

\theoremstyle{definition}

\newtheorem{example}[theorem]{Example}

\newtheorem{remark}[theorem]{Remark}
\newtheorem{definition}[theorem]{Definition}
\pretolerance1000
\textwidth=13.5cm
\begin{document}
\title[Multilinear mappings versus homogeneous polynomials]{Multilinear mappings versus homogeneous polynomials and a multipolynomial
polarization formula}
\author[T. Velanga]{T. Velanga}
\address{Departamento de Matem\'{a}tica\\
Universidade Federal de Rond\^{o}nia\\
76.801-059 Porto Velho, Brazil}
\email{thiagovelanga@unir.br}

\begin{abstract}
We show that $(k,m)$-linear mappings, introduced by I. Chernega and A.
Zagorodnyuk in \cite{cz}, are particular cases of polynomials. As corollaries,
we expose some apparently overlooked properties in the literature. For
instance, every multilinear mapping is a homogeneous polynomial. Applications and contributions to the polarization formula are also provided.

\end{abstract}
\subjclass[2010]{Primary 47H60; Secondary 47L22, 46G25, 46F30, 05E05}
\keywords{multilinear mappings, homogeneous polynomials, $(k,m)$-linear mappings, multipolynomials}
\maketitle




\section{Introduction}

We recall that if $E$ and $F$ are vector spaces, a map $P:E\rightarrow F$ is
called an $m$-homogeneous polynomial if there exists an $m$-linear mapping%
\[
A:E^{m}\rightarrow F
\]
such that%
\[
P(x)=A(x,\ldots,x)
\]
for every $x\in\ E$. The vector space of all $m$-homogeneous polynomials from
$E$ into $F$ is denoted by $\mathcal{P}_{a}(^{m}E;F)$.

Polynomials and multilinear mappings have been exhaustively investigated in
the last decades under many different viewpoints. For instance, multilinear
mappings are present in Harmonic Analysis \cite{ha1}, Functional Analysis
\cite{fa1, fa2, pi2, pi, vs} and, of course, Algebra. On the other hand,
polynomials, for example, are suitable for the investigation of holomorphic
mappings \cite{dineen, Muj} among other various issues (see, for instance,
\cite{ry}).

Henceforth the letter $\mathbb{K}$ will stand either for the field
$\mathbb{R}$ of all real numbers or for the field $\mathbb{C}$ of all complex
numbers. $\mathbb{N}$ will denote the set of all strictly positive integers,
whereas the set $\mathbb{N}\cup\{0\}$ will be denoted by $\mathbb{N}_{0}$.
Unless stated otherwise, the letters $E$ and $F$ will always represent Banach
spaces over the same field $\mathbb{K}$.

Let us recall the following definition:

\begin{definition}
\label{def1}Let $m\in\mathbb{N}$ and $(n_{1},\ldots,n_{m})\in\mathbb{N}^{m}$.
A mapping $P:E^{m}\rightarrow F$ is said to be an $(n_{1},\ldots,n_{m}%
)$-\textit{homogeneous polynomial} if, for each $j$ with $1\leq j\leq m$, the mapping
\[
P\left(  x_{1},\ldots,x_{j-1},\cdot,x_{j+1},\ldots,x_{m}\right)  :E\rightarrow
F
\]
is an $n_{j}$-homogeneous polynomial for all fixed $x_{i}\in E$ with $i\neq j$.
\end{definition}

When $m=1$ we have an $n_{1}$-homogeneous polynomial and when $n_{1}%
=\cdots=n_{m}=1$ then we have an $m$-linear mapping. This kind of map is called a \textit{multipolynomial} and we shall denote by $\mathcal{P}_{a}(^{n_{1}%
,\ldots,n_{m}}E;F)$ the vector space of all $(n_{1},\ldots,n_{m})$-homogeneous polynomials from the cartesian product $E^{m}$ into $F$, whereas we shall denote by $\mathcal{P}(^{n_{1},\ldots,n_{m}}E;F)$ the subspace of all continuous members of $\mathcal{P}_{a}(^{n_{1},\ldots,n_{m}}E;F)$. If $n_{1}=\cdots=n_{m}=n$ we use $\mathcal{P}_{a}(^{n,\overset{m}{\ldots},n}E;F)$ and $\mathcal{P}(^{n,\overset{m}{\ldots},n}E;F)$ instead. Finally, when $F=\mathbb{K}$ then, for short, we shall write $\mathcal{P}_{a}(^{n_{1},\ldots,n_{m}}E)$, $\mathcal{P}(^{n_{1},\ldots,n_{m}}E)$, etc.

I. Chernega and A. Zagorodnyuk conceived the concept of multipolynomials in \cite{cz} (with a different terminology), and it was rediscovered in the current notation/language as an attempt to unify the theories of multilinear mappings and homogeneous polynomials between Banach spaces. An illustration of how it works can be seen in \cite{btv, velanga}.

In Sec. 2 we give an elementary proof that the class of homogeneous polynomials encompasses distinct classes of nonhomogeneous polynomials. In particular, $(k,m)$-linear mappings \cite[Definition 3.1]{cz}, as well as multilinear mappings, are specific cases of polynomials.

In Sec. 3 we furnish a simple example which proves that the linear isomorphism pointed out in \cite[p. 200--201]{cz} is not possible. The proof lies in the fact that such an isomorphism acts only on the proper subspace of all symmetric $(k,m)$-linear mappings which preserve the canonical polarization formula. As an alternative, we propose an extended polarization formula to multipolynomials.

\section{Every multipolynomial is a polynomial}
For each $m\in\mathbb{N}$, let $\mathcal{L}_{a}(^{m}E;F)$ denote the vector space of all $m$-linear mappings
$A:E^{m}\rightarrow F$, and let $\mathcal{L}_{a}^{s}(^{m}E;F)$ denote the subspace of all $A\in\mathcal{L}_{a}(^{m}E;F)$ which are symmetric.

For each $n\in\mathbb{N}$ and each multi-index $\alpha=(\alpha_{1}%
,\ldots,\alpha_{n})\in\mathbb{N}_{0}^{n}$ we set%
\[
\left\vert \alpha\right\vert =\alpha_{1}+\cdots+\alpha_{n}\text{,
\ \ \ }\alpha!=\alpha_{1}!\cdots\alpha_{n}!\text{.}%
\]

Let $A\in\mathcal{L}_{a}(^{m}E;F)$. Then for each $(x_{1},\ldots,x_{n})\in
E^{n}$ and each $\alpha=(\alpha_{1},\ldots,\alpha_{n})\in\mathbb{N}_{0}^{n}$
with $\left\vert \alpha\right\vert =m$ we write%
\[
Ax_{1}^{\alpha_{1}}\cdots x_{n}^{\alpha_{n}}=A\underset{\alpha_{1}%
}{(\underbrace{x_{1},\ldots,x_{1}}},\ldots,\underset{\alpha_{n}}%
{\underbrace{x_{n},\ldots,x_{n}}})\text{.}%
\]

We recall some fundamental results regarding multilinear mappings and homogeneous polynomials that will be useful in this paper (see \cite{Muj}):

\begin{itemize}
\item (Leibniz Formula) If $A\in\mathcal{L}_{a}^{s}(^{m}E;F)$, then for all
$x_{1},\ldots,x_{n}\in E$ we have
\[
A\left(  x_{1}+\cdots+x_{n}\right)  ^{m}=%
{\textstyle\sum}
\frac{m!}{\alpha!}Ax_{1}^{\alpha_{1}}\cdots x_{n}^{\alpha_{n}}%
\]
where the summation is taken over all multi-indices $\alpha=(\alpha_{1}%
,\ldots,\alpha_{n})\in\mathbb{N}_{0}^{n}$ such that $\left\vert \alpha
\right\vert =m$.

\item (Polarization Formula) If $A\in\mathcal{L}_{a}^{s}(^{m}E;F)$, then for
all $x_{0},\ldots,x_{m}\in E$ we have
\[
A\left(  x_{1},\ldots,x_{m}\right)  =\frac{1}{m!2^{m}}\underset{\varepsilon
_{k}=\pm1}{%
{\textstyle\sum}
}\varepsilon_{1}\cdots\varepsilon_{m}A\left(  x_{0}+\varepsilon_{1}%
x_{1}+\cdots+\varepsilon_{m}x_{m}\right)  ^{m}\text{.}%
\]

\item For each $A\in\mathcal{L}_{a}(^{m}E;F)$ let $\widehat{A}\in\mathcal{P}_{a}(^{m}E;F)$ be defined by $\widehat{A}(x)=Ax^{m}$ for every $x\in E$. The mapping
\[
^{\mathcal{\wedge}}:\mathcal{L}_{a}^{s}\left(  ^{m}E;F\right)  \rightarrow
\mathcal{P}_{a}\left(  ^{m}E;F\right)
\]
is a linear isomorphism. We denote the inverse of this mapping by $^{\vee}$.
\end{itemize}

To begin with, we fix some notation. From now on, for fixed $m, n_{1},\ldots, n_{m}$ positive integers, we shall write $M:=\textstyle\sum\nolimits_{j=1}^{m}n_{j}$. For each $m,d\in\mathbb{N}$ we shall denote by $\mathbb{M}_{m\times d}(\mathbb{N}_{0})$ the set of all $m\times d$ matrices\ with entries in $\mathbb{N}_{0}$. Given $\alpha=(\alpha_{ij})_{ij}\in\mathbb{M}_{m\times d}(\mathbb{N}_{0})$ and a fixed $1\leq j_{0}\leq d$, we define $\vert \alpha_{ij_{0}%
}\vert:=%
{\textstyle\sum\nolimits_{i=1}^{m}}
\alpha_{ij_{0}}$, that is, the summation of the $j_{0}$-th column $(\alpha_{1j_{0}},\ldots,\alpha_{mj_{0}})$ of $\alpha$.
For its rows $\alpha_{i}=(\alpha_{i1},\ldots,\alpha_{id})$, $1\leq i\leq m$,
we set $\vert \alpha_{i}\vert :=%
{\textstyle\sum\nolimits_{j=1}^{d}}
\alpha_{ij}$ and $\alpha_{i}!:=\alpha_{i1}!\cdots\alpha_{id}!$. If, for each $i$ with $1\leq i \leq m$, $\lambda_{i}:=(\lambda_{i1},\ldots,\lambda_{id})\in\mathbb{K}^{d}$, we shall write $\lambda_{i}^{\alpha_{i}}:=\lambda_{i1}^{\alpha_{i1}}\cdots\lambda_{id}^{\alpha_{id}}$. More generally, if $\lambda$ and $\alpha$ are infinite-column matrices in $\mathbb{M}_{m\times \infty}(\mathbb{K})$ and $\mathbb{M}_{m\times \infty}(\mathbb{N}_{0})$, respectively, such that $\vert\alpha_{i}\vert=n_{i}$ for each row $i$ with $1\leq i \leq m$, then we shall set $\lambda_{i}^{\alpha_{i}}:=\textstyle\prod\nolimits_{j}\lambda_{ij}^{\alpha_{ij}}$. Finally, given $\varepsilon_{n}\in \{1,-1\}$ with $n\in\mathbb{N}$, we put%
\[
\varepsilon_{i,j}:=%
{\textstyle\sum\nolimits_{k=1}^{\alpha_{ij}}}
\varepsilon_{M-(n_{i}+\cdots+n_{m})+\vert\alpha_{i}\vert-(\alpha_{ij}%
+\cdots+\alpha_{id})+k}%
\]
for each pair $(i,j)\in\{1,\ldots,m\}\times\{1,\ldots,d\}$. For convenience,
we also define $\varepsilon_{i,j}=0$ whenever $\alpha_{ij}=0$.

With this in mind, let $P\in\mathcal{P}_{a}(^{n_{1},\ldots, n_{m}}E;F)$.
Then for all $x_{1},\ldots,x_{d}\in E$ and $\lambda_{i1},\ldots,\lambda_{id}%
\in\mathbb{K}$, $1\leq i\leq m$, one can inductively combine Leibniz and polarization formulas
to yield%
\begin{align}
&  P\left(
{\textstyle\sum\nolimits_{j=1}^{d}}
\lambda_{1j}x_{j},\ldots, {\textstyle\sum\nolimits_{j=1}^{d}}\lambda_{mj}x_{j}\right)\nonumber\\
&  =\frac{1}{2^{M}}%
{\textstyle\sum}
\underset{\varepsilon_{k}=\pm1}{%
{\textstyle\sum}
}\frac{\lambda_{1}^{\alpha_{1}}\cdots\lambda
_{m}^{\alpha_{m}}}{\alpha_{1}!\cdots\alpha_{m}%
!}\varepsilon_{1}\cdots\varepsilon_{M}P\left(
{\textstyle\sum\nolimits_{j=1}^{d}}
\varepsilon_{1,j}x_{j},\ldots,%
{\textstyle\sum\nolimits_{j=1}^{d}}
\varepsilon_{m,j}x_{j}\right)  \label{c}%
\end{align}
where the summation is taken over all matrices $\alpha\in\mathbb{M}_{m\times d}(\mathbb{N}_{0})$ such that
$\vert\alpha_{i}\vert=n_{i}$, for each $i$ with $1\leq i\leq m$.

Eq. (\ref{c}) shows that if $E$ is finite dimensional with a basis $(e_{1},\ldots, e_{d})$, let $\xi_{1},\ldots,\xi_{d}$ denote the corresponding coordinate functionals, then each $P\in\mathcal{P}_{a}(^{n_{1},\ldots, n_{m}}E;F)$ can be uniquely represented as a sum%
\begin{equation}
P=%
{\textstyle\sum}
c_{\alpha}\left(  \xi_{1}^{\alpha_{11}}\cdots\xi
_{d}^{\alpha_{1d}}\right)  \otimes\cdots\otimes\left(  \xi_{1}^{\alpha_{m1}%
}\cdots\xi_{d}^{\alpha_{md}}\right)  \label{a}%
\end{equation}
where $c_{\alpha}\in F$ and where the summation is taken
over all matrices $\alpha\in\mathbb{M}_{m\times d}(
\mathbb{N}_{0})$ such that $\vert \alpha_{i}\vert =n_{i}$, for
each $i$ with $1\leq i\leq m$. In particular, $\mathcal{P}_{a}(^{n_{1},\ldots, n_{m}}E;F)=\mathcal{P}(^{n_{1},\ldots, n_{m}}E;F)$.

Eq. (\ref{a}) unifies previous well-known formulas (see \cite{Muj}). Indeed,
when $n_{1}=\cdots=n_{m}=1$ then each $A\in\mathcal{L}_{a}(^{m}E;F)$ has the
unique representation
\[
A=\underset{j_{1},\ldots,j_{m}=1}{\overset{d}{%
{\textstyle\sum}
}}c_{j_{1}\ldots j_{m}}\xi_{j_{1}}\otimes\cdots\otimes\xi_{j_{m}}\text{.}%
\]
Putting $m=1$, and then $n_{1}=m$, we have the analogous%
\[
P=%
{\textstyle\sum}
c_{\alpha}\xi_{1}^{\alpha_{1}}\cdots\xi_{d}^{\alpha_{d}}\text{,}%
\]
for every \thinspace$P\in\mathcal{P}_{a}(^{m}E;F)$. 

If $E$ is an infinite dimensional Banach space with a Schauder basis $(e_{n})$ and coordinate functionals $(e^{\ast}_{n})$, an application of Eq. (\ref{c}) shows that each $P\in\mathcal{P}(^{n_{1},\ldots, n_{m}}E;F)$ can be uniquely represented as a sum%
\begin{equation}\label{sch}
P\left(x_{1},\ldots, x_{m}\right) ={\textstyle\sum}c_{\alpha}e^{\ast}\left(x_{1}\right)^{\alpha{1}}\cdots e^{\ast}\left(x_{m}\right)^{\alpha_{m}}\text{,}
\end{equation}
for all $x_{1},\ldots, x_{m}\in E$, where $c_{\alpha}\in F$ and where the summation is taken over all matrices $\alpha\in\mathbb{M}_{m\times \infty}(\mathbb{N}_{0})$ such that $\vert\alpha_{i}\vert=n_{i}$, for each $i$ with $1\leq i\leq m$.

\begin{theorem}
\label{infinite ramanujan}Let $E$ and $F$ be vector spaces over $\mathbb{K}$.
Let $\{e_{i}\}_{i\in I}$ be a Hamel basis for $E$ and let
$\xi_{i}$ denote the corresponding coordinate functionals. Then, each
$P\in\mathcal{P}_{a}(  ^{n_{1},\ldots,n_{m}}E;F)$ can be uniquely
represented as a sum%
\begin{align*}
&  P\left(  x_{1},\ldots,x_{m}\right) \\
&  =\underset{i_{1},\ldots,i_{M}\in I}{%
{\textstyle\sum}
}c_{i_{1}\cdots i_{M}}%
{\textstyle\prod\nolimits_{j=1}^{m}}
\left(
{\textstyle\prod\nolimits_{r_{j}=1}^{n_{j}}}
\xi_{i_{M-(n_{j}+\cdots+n_{m})+r_{j}}}\right)  \left(  x_{j}\right)  \text{,}%
\end{align*}
where $c_{i_{1}\cdots i_{M}}\in F$ and where all but finitely many summands
are zero.
\end{theorem}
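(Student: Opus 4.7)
The plan is to proceed by induction on the number of slots $m$, at each step peeling off one variable by applying the isomorphism $^{\wedge}\colon \mathcal{L}_a^s(^n E;F) \to \mathcal{P}_a(^n E;F)$ recalled above the theorem, and using the polarization formula to preserve the multipolynomial structure of the remaining variables.

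For the base case $m=1$, $P$ is an $n_1$-homogeneous polynomial, so there exists a unique $A \in \mathcal{L}_a^s(^{n_1} E;F)$ with $P = \widehat{A}$. Writing $x = \sum_i \xi_i(x) e_i$, which is a finite sum by definition of a Hamel basis, multilinearity of $A$ yields
\[
P(x) = A(x,\ldots,x) = \sum_{i_1,\ldots, i_{n_1}} \xi_{i_1}(x)\cdots \xi_{i_{n_1}}(x)\, A(e_{i_1},\ldots, e_{i_{n_1}}),
\]
which has the required form with $c_{i_1\cdots i_{n_1}} := A(e_{i_1}, \ldots, e_{i_{n_1}})$; uniqueness of these coefficients within their single symmetry block is inherited from uniqueness of $A$.

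For the inductive step, assume the result for $(m-1)$-slot multipolynomials, fix $(x_2,\ldots, x_m)\in E^{m-1}$, and apply the base case to the $n_1$-homogeneous polynomial $x_1 \mapsto P(x_1, x_2, \ldots, x_m)$ to obtain
\[
P(x_1,\ldots, x_m) = \sum_{i_1,\ldots, i_{n_1}} \xi_{i_1}(x_1)\cdots \xi_{i_{n_1}}(x_1)\, Q_{i_1\cdots i_{n_1}}(x_2,\ldots, x_m),
\]
where $Q_{i_1\cdots i_{n_1}}(x_2,\ldots, x_m)$ is the associated symmetric $n_1$-linear form of $P(\cdot, x_2, \ldots, x_m)$ evaluated at $(e_{i_1}, \ldots, e_{i_{n_1}})$. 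The critical claim is that $Q_{i_1\cdots i_{n_1}} \in \mathcal{P}_a(^{n_2,\ldots, n_m} E;F)$. This is where the polarization formula is essential: it rewrites $Q_{i_1 \cdots i_{n_1}}(x_2, \ldots, x_m)$ as a finite linear combination of values $P(y, x_2, \ldots, x_m)$ for auxiliary vectors $y$ depending only on the $e_{i_\ell}$, and each such value is an $(n_2,\ldots,n_m)$-homogeneous polynomial in $(x_2,\ldots, x_m)$ directly from Definition~\ref{def1}, so the finite sum is one as well. Applying the induction hypothesis to each $Q_{i_1\cdots i_{n_1}}$ and substituting yields the displayed formula after reindexing $\{1,\ldots, M\}$ as consecutive blocks of sizes $n_1,\ldots, n_m$. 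Finiteness of each evaluation is automatic from the finite support of Hamel expansions, and uniqueness propagates through the induction. The main anticipated obstacle is precisely the verification that the partial coefficients $Q_{i_1\cdots i_{n_1}}$ remain multipolynomials; once the polarization formula clears this hurdle, the remainder is bookkeeping.
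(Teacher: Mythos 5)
Your proposal is correct and follows essentially the same route as the paper: expand one slot in the Hamel basis via the symmetric multilinear form associated to that slot's homogeneous polynomial, use the polarization formula to exhibit the resulting coefficients as finite linear combinations of values $P(y,x_2,\ldots,x_m)$ (hence as $(n_2,\ldots,n_m)$-homogeneous polynomials in the remaining variables), and iterate. The paper carries this out explicitly for $m=2$ and says ``repeat the process,'' which is precisely the induction on $m$ that you formalize.
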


\begin{proof}
For simplicity, let us do the proof for $m=2$. The proof of the case $m = 2$ makes clear that the other cases are similar. Every $x\in E$ can be uniquely
represented as a sum $x=\textstyle\sum_{i\in I}\xi_{i}(x)e_{i}$ where almost all of the
scalars $\xi_{i}(x)$ (i.e., all but a finite set) are zero. So, we can write%
\[
P\left(  x_{1},x_{2}\right)  =\underset{i_{1},\ldots,i_{n_{1}}\in I}{%
{\textstyle\sum}
}\left(  \xi_{i_{1}}\cdots\xi_{i_{n_{1}}}\right)  \left(  x_{_{1}}\right)
\overset{\vee}{P}_{\left(  \cdot,x_{2}\right)  }\left(  e_{i_{1}}%
,\ldots,e_{i_{n_{1}}}\right)  \text{.}%
\]
Since%
\begin{equation}
\overset{\vee}{P}_{\left(  \cdot,x_{2}\right)  }\left(  e_{i_{1}}%
,\ldots,e_{i_{n_{1}}}\right)  =\frac{1}{n_{1}!2^{n_{1}}}\underset
{\varepsilon_{j}=\pm1}{%
{\textstyle\sum}
}\varepsilon_{1}\cdots\varepsilon_{n_{1}}\overset{\vee}{P}_{\left(
\underset{k=1}{\overset{n_{1}}{%
{\textstyle\sum}
}}\varepsilon_{k}e_{i_{k}},\cdot\right)  }x_{2}^{n_{2}}\nonumber\text{,}
\end{equation}
repeat the process for $\overset{\vee}{P}_{\left(
{\textstyle\sum_{k=1}^{n_{1}}}
\varepsilon_{k}e_{i_{k}},\cdot\right)}$ and the proof is done with%
\[
c_{i_{1}\cdots i_{M}}=\frac{1}{n_{1}!n_{2}!2^{M}}\underset{\varepsilon_{j}%
=\pm1}{%
{\textstyle\sum}
}\varepsilon_{1}\cdots\varepsilon_{M}P\left(  \underset{k=1}{\overset{n_{1}}{%
{\textstyle\sum}
}}\varepsilon_{k}e_{i_{k}},\underset{k=1}{\overset{n_{2}}{%
{\textstyle\sum}
}}\varepsilon_{n_{1}+k}e_{i_{n_{1}+k}}\right)  \text{,}%
\]
for every $i_{1},\ldots,i_{M}\in I$.
\end{proof}

\begin{corollary}
\label{b}Let $E$ and $F$ be vector spaces over $\mathbb{K}$. Then
\begin{equation}\label{inc}
\mathcal{P}_{a}\left(  ^{n_{1},\ldots,n_{m}}E;F\right)  \subset\mathcal{P}%
_{a}\left(  ^{M}E^{m};F\right)  \text{.}%
\end{equation}
\end{corollary}

\begin{proof}
Indeed, the map $A:(\underset{m}{\underbrace{E\times\cdots\times E}}%
)^{M}\rightarrow F$ defined by%
\begin{align*}
&  A\left(  \left(  x_{11},\ldots,x_{1m}\right)  ,\ldots,\left(  x_{M1}%
,\ldots,x_{Mm}\right)  \right) \\
&  =\underset{i_{1},\ldots,i_{M}}{%
{\textstyle\sum}
}c_{i_{1}\cdots i_{M}}%
{\textstyle\prod\nolimits_{j=1}^{m}}
{\textstyle\prod\nolimits_{r_{j}=1}^{n_{j}}}
\xi_{i_{M-(n_{j}+\cdots+n_{m})+r_{j}}}\left(  x_{[M-(n_{j}+\cdots+n_{m}%
)+r_{j}]j}\right)
\end{align*}
is an $M$-linear mapping which is equal to $P$ on the diagonal.
\end{proof}

In other words, every $(n_{1},\ldots,n_{m})$-homogeneous polynomial is an
$M$-homogeneous polynomial.

\begin{remark}
\bigskip It is worth noting that $(k,m)$-linear mappings, introduced by I.
Chernega and A. Zagorodnyuk in \cite[Definition 3.1]{cz}, are $km$-homogeneous
polynomials. It suffices to observe that $\mathcal{L}_{a}(_{m}^{k}E;F)=\mathcal{P}_{a}(^{m,\overset{k}{\ldots},m}E;F)$ and apply Corollary
\ref{b}.
\end{remark}

If $n_{1}=\cdots=n_{m}=1$, then Corollary \ref{b} also implies the following:

\begin{corollary}
Let $E$ and $F$ be vector spaces over $\mathbb{K}$. Then every $m$-linear
mapping in $\mathcal{L}_{a}(^{m}E;F)$ is an $m$-homogeneous polynomial in
$\mathcal{P}_{a}(^{m}(E^{m});F)$.
\end{corollary}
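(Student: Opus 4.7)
The plan is to specialize Corollary \ref{b} to the multi-degree $(n_{1},\ldots,n_{m})=(1,\ldots,1)$. First I would observe, as noted in the paper immediately after Definition \ref{def1}, that when all $n_{j}=1$ the requirement that $P(x_{1},\ldots,x_{j-1},\cdot,x_{j+1},\ldots,x_{m})$ be a $1$-homogeneous polynomial in each variable reduces to separate linearity in each variable. Consequently
\[
\mathcal{P}_{a}\left(  ^{1,\overset{m}{\ldots},1}E;F\right)  =\mathcal{L}_{a}(^{m}E;F),
\]
so any $A\in\mathcal{L}_{a}(^{m}E;F)$ lies in the left-hand side of the inclusion of Corollary \ref{b}.

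Next I would compute $M=\sum_{j=1}^{m}n_{j}=m$ and plug directly into Corollary \ref{b}, which gives
\[
\mathcal{L}_{a}(^{m}E;F)=\mathcal{P}_{a}\left(  ^{1,\ldots,1}E;F\right)  \subset\mathcal{P}_{a}\left(  ^{M}E^{m};F\right)  =\mathcal{P}_{a}\left(  ^{m}(E^{m});F\right),
\]
which is precisely the desired statement. This is a one-line specialization.

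There is essentially no obstacle: the substantive work is already done in Theorem \ref{infinite ramanujan} and Corollary \ref{b}, and all that remains is the (trivial) identification of $(1,\ldots,1)$-homogeneous polynomials with $m$-linear mappings. Should an explicit witness be desired, I would unwind the construction from the proof of Corollary \ref{b}, which in this case yields the $m$-linear map $\widetilde{A}$ on $(E^{m})^{m}$ sending $\bigl((x_{11},\ldots,x_{1m}),\ldots,(x_{m1},\ldots,x_{mm})\bigr)$ to $A(x_{11},x_{22},\ldots,x_{mm})$ (extended by multilinearity via the basis expansion), whose restriction to the diagonal reproduces $A$ viewed as a map on $E^{m}$.
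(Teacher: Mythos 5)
Your proposal is correct and matches the paper exactly: the paper states this corollary as an immediate specialization of Corollary \ref{b} to $n_{1}=\cdots=n_{m}=1$ (so $M=m$), with no further argument given. Your explicit witness $\widetilde{A}\bigl((x_{11},\ldots,x_{1m}),\ldots,(x_{m1},\ldots,x_{mm})\bigr)=A(x_{11},x_{22},\ldots,x_{mm})$ is a nice bonus and is already $m$-linear as written, with no need for the basis expansion.
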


Some applications are in order:
\begin{itemize}
\item When $m=1$, inclusion (4) trivially becomes equality, but it is always strict when $m>1$. For instance, when $n_{1}=\cdots=n_{m}=1$, it is clear that there exists a homogeneous polynomial in $\mathcal{P}_{a}(^{m}(E^{m});F)$ which is not an $m$-linear mapping in $\mathcal{L}_{a}(^{m}E;F)$. If $n_{i}>1$ for some $i$ with $1\leq i\leq m$, let us say $m=2$ and $n_{2}=2$, the mapping
\begin{equation*}
P:\ell_{2}\times\ell_{2}\longrightarrow\mathbb{K}, \quad P((x_{j}), (y_{j}))=\textstyle\sum\nolimits_{j}x_{j}^{3}
\end{equation*}
belongs to $\mathcal{P}(^{3}(\ell_{2}\times\ell_{2}))$, with 
$\overset{\vee}{P}((a, b), (c, d), (w, z))=\textstyle\sum\nolimits_{j}a_{j}c_{j}w_{j}$,
but $P\notin\mathcal{P}(^{1, 2}\ell_{2})$, by Eq. (\ref{sch}). Analogously,
\begin{equation*}
Q:\ell_{2}\times\ell_{2}\longrightarrow\mathbb{K}, \quad Q((x_{j}), (y_{j}))=\textstyle\sum\nolimits_{j}x_{j}^{2}y_{j}
\end{equation*}
is another instance in $\mathcal{P}(^{3}(\ell_{2}\times\ell_{2}))$ which is not in $\mathcal{P}(^{1, 2}\ell_{2})$.
\item The previous results show, in particular, that (algebraically speaking) multilinear mappings are homogeneous polynomials. So, at first glance, one may wonder why the theory of multilinear mappings is investigated separately? The point is that this algebraic identification does not catch analytical information. For instance, the estimate (see \cite[Theorem 3.3 and Corollary 3.4]{velanga}) %
\begin{equation*}
\left\Vert A(x_{1},\ldots ,x_{m})\right\Vert \leq \left\Vert A\right\Vert
\left\Vert (x_{1},\ldots ,x_{m})\right\Vert ^{m},
\end{equation*}%
is far less precise than%
\begin{equation}
\left\Vert A(x_{1},\ldots ,x_{m})\right\Vert \leq \left\Vert A\right\Vert
\left\Vert x_{1}\right\Vert \cdots \left\Vert x_{m}\right\Vert\text{.}  \label{g66}
\end{equation}%
In this sense, when dealing with quantitative, computational or statistical problems and applications, such as (to cite some) the search for optimal constants in Hardy--Littlewood and Bohnenblust--Hille inequalities, Gale--Berlekamp games, and applications for multilinear forms (see \cite{b,a,d,c}), the above identification is useless. However, Corollary \ref{b} says that qualitative results, especially topological properties, e.g., uniform boundedness principle and Banach--Steinhaus theorem \cite[Theorem 3.6 and Corollary 3.7]{velanga}, can be inherited from polynomials. 
\end{itemize}

\section{A multipolynomial polarization formula}

For each $m,n\in\mathbb{N}$, we shall denote by $\mathcal{P}_{a}%
^{s}(^{n,\overset{m}{\ldots},n}E;F)$ the subspace of all $P\in\mathcal{P}%
_{a}(^{n,\overset{m}{\ldots},n}E;F)$ which are symmetric, that is, such that
\[
P\left(  x_{\sigma\left(  1\right)  },\ldots,x_{\sigma\left(  m\right)
}\right)  =P\left(  x_{1},\ldots,x_{m}\right)
\]
for all $x_{1},\ldots,x_{m}\in E$ and for any permutation $\sigma$ of the set
$\{1,\ldots,m\}$. Note that if $n_{i}\neq n_{j}$ for some $1\leq i\neq j\leq
m$, then multi-homogeneity and symmetry imply that $\mathcal{P}_{a}%
^{s}(^{n_{1},\ldots,n_{m}}E;F)=\{0\}$.

\begin{definition}
Let $m$ and $n$ be positive integers. Let $M\subset\mathbb{M}_{m\times
(m+1)}(\mathbb{N}_{0})$ be the subset of $m\times(m+1)$
matrices $\alpha$ such that its $0$th column is zero and $%
{\textstyle\sum\nolimits_{j=1}^{m}}
\alpha_{ij}=n=%
{\textstyle\sum\nolimits_{i=1}^{m}}
\alpha_{ij}$, for all $i,j\in\{1,\ldots,m\}$. We define the \textit{remainder
function }$R_{n}:E^{m}\rightarrow F$ as follows:%
\begin{align*}
&  R_{n}(x_{1},\ldots,x_{m})\\
&  =\underset{\alpha\in M\backslash D}{%
{\textstyle\sum}
}%
{\textstyle\sum\nolimits_{\varepsilon_{k}=\pm1}}
\frac{\varepsilon_{1}\cdots\varepsilon_{mn}}{\alpha_{1}!\cdots\alpha_{m}%
!}P\left(  \underset{j=1}{\overset{m}{%
{\textstyle\sum}
}}\varepsilon_{1,j}x_{j},\ldots,\underset{j=1}{\overset{m}{%
{\textstyle\sum}
}}\varepsilon_{m,j}x_{j}\right)  \text{,}%
\end{align*}
where
\[
D=\left\{\alpha\in M: \forall j\in\{1,\ldots
,m\}~\exists i\in\{1,\ldots,m\} \; \text{s.t.} \; \alpha_{ij}=n\right\}\text{.}%
\]

\end{definition}

In other words, $D$ can be seen as the set of all $m!$ row-permutation
matrices of the diagonal matrix $(d_{ij})_{ij}=n$.

\begin{theorem}
\label{t2}Let $P\in\mathcal{P}_{a}^{s}(^{n,\overset{m}{\ldots},n}E;F)$. Then
for all \thinspace$x_{0},\ldots,x_{m}\in E$ we have%
\begin{align*}
&  P(x_{1},\ldots,x_{m})\\
&  =\frac{1}{m!(n!2^{n})^{m}}\underset{\varepsilon_{k}=\pm1}{%
{\textstyle\sum}
}\varepsilon_{1}\cdots\varepsilon_{mn}P\left(  x_{0}+\underset{k=1}%
{\overset{n}{%
{\textstyle\sum}
}}\varepsilon_{k}x_{1}+\cdots+\underset{k=1}{\overset{n}{%
{\textstyle\sum}
}}\varepsilon_{(m-1)n+k}x_{m}\right)  ^{m}\\
&  -\frac{1}{m!2^{mn}}R_{n}(x_{1},\ldots,x_{m})\text{.}%
\end{align*}

\end{theorem}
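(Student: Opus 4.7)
The plan is to evaluate the signed sum
\[
S\;:=\;\sum_{\varepsilon_k=\pm 1}\varepsilon_1\cdots\varepsilon_{mn}\,P\!\left(x_0+\sum_{k=1}^n\varepsilon_k x_1+\cdots+\sum_{k=1}^n\varepsilon_{(m-1)n+k} x_m\right)^{\!m}
\]
by expanding $P(y,\ldots,y)$ with $y:=x_0+\eta_1 x_1+\cdots+\eta_m x_m$, where $\eta_0:=1$ and $\eta_j:=\sum_{k=1}^n\varepsilon_{(j-1)n+k}$ for $1\le j\le m$. The goal is to show that $S$ splits into the ``diagonal'' part $2^{mn}(n!)^m m!\,P(x_1,\ldots,x_m)$ (coming from the permutation matrices in $D$) plus the residual $(n!)^m R_n(x_1,\ldots,x_m)$; dividing by $m!(n!2^n)^m$ then produces the stated identity.

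First, using that each slot of $P$ is $n$-homogeneous with its own symmetric polar form, apply Leibniz slot by slot to $P(y,\ldots,y)$ to obtain
\[
P(y,\ldots,y)\;=\;\sum_{\alpha}\prod_{i=1}^m\frac{n!}{\alpha_i!}\prod_{j=0}^m\eta_j^{|\alpha_{\cdot j}|}\,A\!\left(x_0^{\alpha_{10}}\cdots x_m^{\alpha_{1m}};\,\ldots\,;x_0^{\alpha_{m0}}\cdots x_m^{\alpha_{mm}}\right),
\]
where $A$ denotes the multi-symmetric polar form of $P$ and the sum is taken over all $\alpha\in\mathbb{M}_{m\times(m+1)}(\mathbb{N}_0)$ with $|\alpha_i|=n$ for every $i$. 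Multiplying by $\varepsilon_1\cdots\varepsilon_{mn}$ and summing, the factor $\eta_0=1$ is sign-free, so the signed sum factorizes across the $m$ blocks of $n$ consecutive signs attached to $\eta_1,\ldots,\eta_m$, and a parity argument yields
\[
\sum_{\delta\in\{\pm 1\}^n}\delta_1\cdots\delta_n\!\left(\sum_{k=1}^n\delta_k\right)^{\!s}=\begin{cases} 2^n\,n!&\text{if }s=n,\\ 0&\text{if }0\le s<n.\end{cases}
\]
Combined with the row-sum constraint $\sum_{j=0}^m|\alpha_{\cdot j}|=mn$, only matrices with $|\alpha_{\cdot 0}|=0$ and $|\alpha_{\cdot j}|=n$ for $j\ge 1$ survive---that is, $\alpha\in M$---each contributing a factor $2^{mn}(n!)^m$.

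To conclude, split $M=D\sqcup(M\setminus D)$. For $\alpha\in D$, each row of the $m\times m$ tail is a scaled standard basis vector, so $\prod_i n!/\alpha_i!=1$ and by the symmetry of $P$ the term $A(x_{\sigma^{-1}(1)}^n;\ldots;x_{\sigma^{-1}(m)}^n)$ collapses to $P(x_1,\ldots,x_m)$; summing over $\sigma\in S_m$ yields $m!\,P(x_1,\ldots,x_m)$. For $\alpha\in M\setminus D$, apply the standard one-variable polarization formula inside each of the $m$ slots of $A$: this rewrites the corresponding term as $\frac{1}{(n!2^n)^m}\sum_{\varepsilon'}\varepsilon'_1\cdots\varepsilon'_{mn}P(\sum_j\varepsilon'_{1j}x_j,\ldots,\sum_j\varepsilon'_{mj}x_j)$ using exactly the paper's index convention $\varepsilon_{ij}$, and the subsequent $1/(\alpha_1!\cdots\alpha_m!)$-weighted sum over $M\setminus D$ reproduces $R_n(x_1,\ldots,x_m)/2^{mn}$. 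Assembling, $S=2^{mn}(n!)^m m!\,P(x_1,\ldots,x_m)+(n!)^m R_n(x_1,\ldots,x_m)$, and solving for $P(x_1,\ldots,x_m)$ gives the theorem. The main obstacle is the final index bookkeeping, namely verifying that the per-slot polarization applied to $A$ at each $\alpha\in M\setminus D$ reproduces precisely the convoluted indexing $\varepsilon_{ij}=\sum_{k=1}^{\alpha_{ij}}\varepsilon_{(i-1)n+|\alpha_i|-(\alpha_{ij}+\cdots+\alpha_{id})+k}$ prescribed in the definition of $R_n$; the scalar parity identity and the symmetry collapse on $D$ are otherwise routine.
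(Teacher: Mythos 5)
Your proof is correct and follows essentially the same route as the paper: both arguments expand the signed sum via the Leibniz formula, use the parity identity $\sum_{\delta}\delta_1\cdots\delta_n(\sum_k\delta_k)^s$ together with the row-sum constraint to reduce to the matrices in $M$, split $M=D\sqcup(M\setminus D)$, and recover $R_n$ by per-slot polarization. The only organizational difference is that you keep the multi-symmetric polar form $A$ explicit and convert back to $P$-values at the end, whereas the paper packages that conversion upfront into its Eq.~(\ref{c}) and then specializes it.
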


\begin{proof}
By Eq. (\ref{c}) we have that%
\begin{align*}
&  P\left(  x_{0}+\underset{k=1}{\overset{n}{%
{\textstyle\sum}
}}\delta_{k}x_{1}+\cdots+\underset{k=1}{\overset{n}{%
{\textstyle\sum}
}}\delta_{(m-1)n+k}x_{m}\right)  ^{m}\\
&  =\frac{1}{2^{mn}}%
{\textstyle\sum}
\underset{\varepsilon_{k}=\pm1}{%
{\textstyle\sum}
}\frac{\underset{j=1}{\overset{m}{%
{\textstyle\prod}
}}\left(  \underset{k_{j}=1}{\overset{n}{%
{\textstyle\sum}
}}\delta_{(j-1)n+k_{j}}\right)  ^{\left\vert \alpha_{ij}\right\vert }}%
{\alpha_{1}!\cdots\alpha_{m}!}\varepsilon_{1}\cdots\varepsilon_{mn}P\left(
\underset{j=0}{\overset{m}{%
{\textstyle\sum}
}}\varepsilon_{1,j}x_{j},\ldots,\underset{j=0}{\overset{m}{%
{\textstyle\sum}
}}\varepsilon_{m,j}x_{j}\right)
\end{align*}
where the summation is taken over all matrices $\alpha\in\mathbb{M}_{m\times(m+1)}(\mathbb{N}_{0})$ such that
$\alpha_{i0}+\cdots+\alpha_{im}=n$, for each $i$ with $1\leq i\leq m$. Thus,
if $\vert\alpha_{ij_{0}}\vert >n$ for some column $(\alpha_{1j_{0}},\ldots,\alpha_{mj_{0}})$ with $1\leq j_{0}\leq m$, then
there must exist $1\leq j_{1}\neq j_{0}\leq m$ such that $\vert\alpha_{ij_{1}}\vert <n$. Otherwise, we would have $%
{\textstyle\sum\nolimits_{i,j=1}^{m}}
\alpha_{ij}>mn$, which is absurd. Since for each $j=1,\ldots,m$ we have%
\[
\underset{\delta_{k}=\pm1}{%
{\textstyle\sum}
}\delta_{(j-1)n+1}\cdots\delta_{jn}\left(  \underset{k_{j}=1}{\overset{n}{%
{\textstyle\sum}
}}\delta_{(j-1)n+k_{j}}\right)  ^{\left\vert \alpha_{ij}\right\vert }=\left\{
\begin{array}
[c]{ccc}%
0\text{,} & \text{if} & \left\vert \alpha_{ij}\right\vert <n\\
n!2^{n}\text{,} & \text{if} & \left\vert \alpha_{ij}\right\vert =n
\end{array}
\text{,}%
\right.
\]
it follows that%
\begin{align*}
&  \underset{\delta_{k}=\pm1}{%
{\textstyle\sum}
}\delta_{1}\cdots\delta_{mn}P\left(  x_{0}+\underset{k=1}{\overset{n}{%
{\textstyle\sum}
}}\delta_{k}x_{1}+\cdots+\underset{k=1}{\overset{n}{%
{\textstyle\sum}
}}\delta_{(m-1)n+k}x_{m}\right)  ^{m}\\
&  =\left(  n!\right)  ^{m}\left[
\begin{array}
[c]{c}%
\underset{\alpha\in D}{%
{\textstyle\sum}
}%
{\textstyle\sum\nolimits_{\varepsilon_{k}=\pm1}}
\frac{\varepsilon_{1}\cdots\varepsilon_{mn}}{\alpha_{1}!\cdots\alpha_{m}%
!}P\left(  \underset{j=1}{\overset{m}{%
{\textstyle\sum}
}}\varepsilon_{1,j}x_{j},\ldots,\underset{j=1}{\overset{m}{%
{\textstyle\sum}
}}\varepsilon_{m,j}x_{j}\right) \\
+R_{n}(x_{1},\ldots,x_{m})
\end{array}
\right]  \text{.}%
\end{align*}
Since $P$ is symmetric and $\#D=m!$, we get%
\begin{align*}
&  \underset{\alpha\in D}{%
{\textstyle\sum}
}%
{\textstyle\sum\nolimits_{\varepsilon_{k}=\pm1}}
\frac{\varepsilon_{1}\cdots\varepsilon_{mn}}{\alpha_{1}!\cdots\alpha_{m}%
!}P\left(  \underset{j=1}{\overset{m}{%
{\textstyle\sum}
}}\varepsilon_{1,j}x_{j},\ldots,\underset{j=1}{\overset{m}{%
{\textstyle\sum}
}}\varepsilon_{m,j}x_{j}\right) \\
&  =m!\left(  \frac{n!2^{n}}{n!}\right)  ^{m}P\left(  x_{1},\ldots
,x_{m}\right)  \text{,}%
\end{align*}
and the desired result follows.
\end{proof}

\begin{corollary}
\label{p}Let $A\in\mathcal{L}_{a}^{s}(^{m}E;F)$. Then for all \thinspace
$x_{0},\ldots,x_{m}\in E$ we have%
\[
A\left(  x_{1},\ldots,x_{m}\right)  =\frac{1}{m!2^{m}}\underset{\varepsilon
_{k}=\pm1}{%
{\textstyle\sum}
}\varepsilon_{1}\cdots\varepsilon_{m}A\left(  x_{0}+\varepsilon_{1}%
x_{1}+\cdots+\varepsilon_{m}x_{m}\right)  ^{m}\text{.}%
\]

\end{corollary}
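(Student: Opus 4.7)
The plan is to deduce the corollary as the special case $n=1$ of Theorem \ref{t2}. First, I would note that Definition \ref{def1} with $n_{1}=\cdots=n_{m}=1$ says that fixing $m-1$ variables produces a $1$-homogeneous polynomial, i.e.\ a linear map; hence $\mathcal{P}_{a}^{s}({}^{1,\overset{m}{\ldots},1}E;F)=\mathcal{L}_{a}^{s}({}^{m}E;F)$, so Theorem \ref{t2} is directly applicable to any $A\in\mathcal{L}_{a}^{s}({}^{m}E;F)$ with $n=1$.

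Next, I would simplify the master formula at $n=1$. The prefactor $\frac{1}{m!(n!2^{n})^{m}}$ collapses to $\frac{1}{m!2^{m}}$, there are $mn=m$ signs $\varepsilon_{k}$, and each inner sum $\sum_{k=1}^{n}\varepsilon_{(j-1)n+k}x_{j}$ reduces to the single term $\varepsilon_{j}x_{j}$. Consequently the principal summand takes exactly the form
\[
\frac{1}{m!2^{m}}\sum_{\varepsilon_{k}=\pm1}\varepsilon_{1}\cdots\varepsilon_{m}A\left(x_{0}+\varepsilon_{1}x_{1}+\cdots+\varepsilon_{m}x_{m}\right)^{m},
\]
which is already the right-hand side of the claim.

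It remains to show that the remainder $R_{1}$ vanishes identically, and for this it suffices to verify that $M\setminus D=\emptyset$ when $n=1$. A matrix $(\alpha_{ij})\in M$ at $n=1$ has its $0$th column zero and both row and column sums (over columns $1,\ldots,m$) equal to $1$; its $m\times m$ block on columns $1,\ldots,m$ is therefore a $0/1$-matrix with unit row and column sums, i.e.\ a permutation matrix. In particular, every column $j\in\{1,\ldots,m\}$ then contains an entry equal to $n=1$, so $(\alpha_{ij})\in D$. Hence $M=D$, the sum defining $R_{1}$ is empty, and the corollary follows. The only delicate point is this combinatorial identification, which is the standard characterization of permutation matrices as $0/1$-matrices with unit row and column sums; no analytic obstacle arises in the argument.
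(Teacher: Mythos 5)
Your proposal is correct and follows exactly the paper's route: the paper also obtains the corollary by setting $n=1$ in Theorem \ref{t2} and observing that $D=M$ forces $R_{1}=0$. The only difference is that you spell out the (standard) reason why $M=D$ at $n=1$ --- the $m\times m$ block is a permutation matrix --- which the paper leaves implicit.
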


\begin{proof}
Choose $n=1$ in the theorem and observe that since $D=M$ the
remainder-function $R_{1}$ must be zero.
\end{proof}

If $n>1$, the pointwise-polynomial nature of a multipolynomial
$P\in\mathcal{P}_{a}^{s}(^{n,\overset{m}{\ldots},n}E;F)$ is an obstacle
to obtain, in general, an \textit{exact} polarization formula, that is, the
one with null remainder-function. The next results characterize the class of
such mappings as a proper subspace of $\mathcal{P}_{a}^{s}(^{n,\overset
{m}{\ldots},n}E;F)$.

\begin{proposition}
\bigskip For each $A\in\mathcal{L}_{a}^{s}(^{mn}E;F)$ let $\Psi A\in
\mathcal{P}_{a}^{s}(^{n,\overset{m}{\ldots},n}E;F)$ be defined by $\Psi
A(x_{1},\ldots,x_{m})=Ax_{1}^{n}\cdots x_{m}^{n}$ for every $x_{1}%
,\ldots,x_{m}\in E$. Then $\Psi$ is a linear isomorphism onto its range $\operatorname{Im}\Psi$.
Moreover, for each $P\in\mathcal{P}_{a}^{s}(^{n,\overset{m}{\ldots},n}E;F)$,
we have the following equivalent conditions:
\end{proposition}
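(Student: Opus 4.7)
The plan is to dispatch the three assertions about $\Psi$ (well-definedness, linearity, injectivity) first, and then turn to the equivalences, whose exact statement is cut off but which I take, in view of the paragraph preceding the proposition, to characterize membership in $\operatorname{Im}\Psi$ through the vanishing of the remainder $R_n$ from Theorem~\ref{t2}.

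That $\Psi A$ actually lies in $\mathcal{P}_{a}^{s}(^{n,\overset{m}{\ldots},n}E;F)$ should be routine: $n$-homogeneity in each variable $x_j$ is immediate from the $mn$-linearity of $A$ applied to a block of $n$ equal entries, and symmetry of $\Psi A$ in the $m$ blocks follows from the full symmetry of $A$ on all $mn$ arguments, since a permutation of blocks is a permutation of the underlying coordinates. Linearity of $\Psi$ is direct from the defining formula.

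Injectivity is the only subtle point, and it turns out to be short. Suppose $\Psi A = 0$. Specializing $x_1 = \cdots = x_m = x$ yields $Ax^{mn} = 0$ for every $x \in E$, i.e.\ $\widehat{A} = 0$ in $\mathcal{P}_{a}(^{mn}E;F)$. Since $^{\wedge}\colon \mathcal{L}_{a}^{s}(^{mn}E;F)\to\mathcal{P}_{a}(^{mn}E;F)$ is a linear isomorphism (recalled at the top of Section~1), this forces $A=0$. Hence $\Psi$ is a linear injection, and therefore a linear isomorphism onto its image. This is cleaner than attempting to invert $\Psi$ by a block-wise polarization.

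For the equivalent conditions, the natural candidates (given the surrounding discussion) are: $P\in\operatorname{Im}\Psi$; the identity of Theorem~\ref{t2} holds \emph{entirely} for $P$, i.e.\ $R_n\equiv 0$; and possibly that the $mn$-homogeneous polynomial on $E^m$ associated with $P$ by Corollary~\ref{b} lifts to an $mn$-homogeneous polynomial on $E$. The forward direction $P=\Psi A\Rightarrow R_n\equiv 0$ should follow by substituting $P=\Psi A$ into Theorem~\ref{t2} and comparing with the ordinary polarization formula applied to $\widehat{A}\in\mathcal{P}_{a}(^{mn}E;F)$, so that the main term already reconstructs $Ax_1^n\cdots x_m^n$ and no correction is needed. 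The converse is where I expect the main work: given $R_n\equiv 0$, Theorem~\ref{t2} writes $P(x_1,\ldots,x_m)$ as a polarization-type sum in an $m$-th power, from which one reads off a candidate symmetric $mn$-linear $A$ on $E$; the obstacle is verifying that the symmetry of $P$ in its $m$ blocks upgrades to full symmetry of $A$ on all $mn$ coordinates, rather than merely to symmetry under the block-wise action of the wreath product $S_n\wr S_m$.
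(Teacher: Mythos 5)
Your handling of the first assertion is sound: injectivity of $\Psi$ via $\Psi A(x,\ldots,x)=Ax^{mn}=\widehat{A}(x)$ and the isomorphism $^{\wedge}$ is exactly the mechanism the paper invokes (through Corollary \ref{p}), and your sketch of $(a)\Rightarrow(b)$ --- specialize the polarization formula of $A$ to the block arguments $(x_{1},\ldots,x_{1},\ldots,x_{m},\ldots,x_{m})$ --- is also what the paper does.

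The second half has two genuine problems. First, the condition you reconstructed is not the one in the paper and is in fact false as a characterization of $\operatorname{Im}\Psi$: the actual condition (b) is the \emph{entire} polarization formula with constant $\frac{1}{(mn)!2^{mn}}$, whereas Theorem \ref{t2} carries the constant $\frac{1}{m!(n!2^{n})^{m}}$. Comparing the two identities on $\operatorname{Im}\Psi$ gives $R_{n}=2^{mn}\bigl(\frac{(mn)!}{(n!)^{m}}-m!\bigr)P$, which is a nonzero multiple of $P$ whenever $m,n\geq 2$; so ``$R_{n}\equiv 0$'' fails on $\operatorname{Im}\Psi$ and your forward direction, as you state it, is not true. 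Second, and independently of that misreading, you leave the converse as an unresolved ``obstacle'' about upgrading block symmetry of $P$ to full symmetry of a candidate $A$ --- but that is precisely the step requiring an idea, and you do not supply one. The paper's resolution is to route through Corollary \ref{b}: $P$ is an $mn$-homogeneous polynomial on $E^{m}$, hence has a fully symmetric associated form $\overset{\vee}{P}\in\mathcal{L}_{a}^{s}(^{mn}(E^{m});F)$, and one defines $A(x_{1},\ldots,x_{mn})=\overset{\vee}{P}\left(\left(x_{1},\ldots,x_{1}\right),\ldots,\left(x_{mn},\ldots,x_{mn}\right)\right)$ by pulling back along the diagonal embedding $E\hookrightarrow E^{m}$. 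Full symmetry of $A$ is then inherited from $\overset{\vee}{P}$ with no wreath-product issue, and the polarization formula for $\overset{\vee}{P}$ evaluated at diagonal vectors is term-by-term the right-hand side of (b), since $\overset{\vee}{P}(\Delta y)^{mn}=P(y)^{m}$; hence (b) asserts exactly $P=\Psi A$. Without this (or an equivalent) construction, the equivalence is not established.
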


\begin{description}
\item[(a)] $P\in\operatorname{Im}\Psi$;

\item[(b)] For all $x_{0},\ldots,x_{m}\in E$ we have the \textit{exact}
polarization formula%
\begin{align*}
&  P\left(  x_{1},\ldots,x_{m}\right) \\
&  =\frac{1}{\left(  mn\right)  !2^{mn}}\underset{\varepsilon_{k}=\pm1}{%
{\textstyle\sum}
}\varepsilon_{1}\cdots\varepsilon_{mn}P\left(  x_{0}+\underset{k=1}%
{\overset{n}{%
{\textstyle\sum}
}}\varepsilon_{k}x_{1}+\cdots+\underset{k=1}{\overset{n}{%
{\textstyle\sum}
}}\varepsilon_{(m-1)n+k}x_{m}\right)  ^{m}\text{.}%
\end{align*}
\noindent\noindent
\end{description}

\begin{proof}
By Corollary \ref{p}, we get the 1st and $(a)\Rightarrow(b)$ statements. By
Corollary \ref{b}, there exists a unique $\overset{\vee}{P}\in\mathcal{L}%
_{a}^{s}(^{mn}(E^{m});F)$ which is equal to $P$ on its diagonal. Now, it
suffices to consider $A\in\mathcal{L}_{a}^{s}(^{mn}E;F)$ defined by
\[
A\left(  x_{1},\ldots,x_{mn}\right)  =\overset{\vee}{P}\left(  \left(
x_{1},\ldots,x_{1}\right)  ,\ldots,\left(  x_{mn},\ldots,x_{mn}\right)
\right)\text{,}
\]
and notice that%
\begin{align*}
&  \overset{\vee}{P}\left(  \left(  \underset{k=1}{\overset{n}{%
{\textstyle\sum}
}}\varepsilon_{k}\right)  \left(  x_{1},\ldots,x_{1}\right)  +\cdots+\left(
\underset{k=1}{\overset{n}{%
{\textstyle\sum}
}}\varepsilon_{(m-1)n+k}\right)  \left(  x_{m},\ldots,x_{m}\right)  \right)
^{mn}\\
&  =P\left(  \underset{k=1}{\overset{n}{%
{\textstyle\sum}
}}\varepsilon_{k}x_{1}+\cdots+\underset{k=1}{\overset{n}{%
{\textstyle\sum}
}}\varepsilon_{(m-1)n+k}x_{m}\right)  ^{m}\text{.}%
\end{align*}

\end{proof}

\begin{example}
Let $E=\mathbb{R}^{2}$, $F=\mathbb{K=R}$ and let $(e_{1},e_{2})$ be the
canonical basis of $E$. By Eq. (\ref{a}), with $m=n=2$, we have that the
mapping%
\[
P\left(  \left(  x_{1},x_{2}\right)  ,\left(  y_{1},y_{2}\right)  \right)
=x_{1}x_{2}y_{1}y_{2}%
\]
belongs to $\mathcal{P}_{a}^{s}(^{n,n}E;F)$ but $P\notin\operatorname{Im}\Psi
$. Indeed, one can quickly check that such a $P$ cannot satisfy the exact
polarization formula. For instance, take $x_{0}=0$, $x=e_{1}$, and $y=e_{2}$.
\end{example}

\begin{remark}
By the above proposition and example, we conclude with a correction to the important paper \cite[p. 200--201]{cz}.
Namely, the canonical isomorphism indicated therein cannot occur between $\mathcal{L}%
_{a}^{s}(^{km}E;F)$ onto the whole vector space $\mathcal{L}_{a}^{s}(_{m}%
^{k}E;F)$ of all symmetric $(k,m)$-linear mappings (or, with our notation,
onto $\mathcal{P}_{a}^{s}(^{m,\overset{k}{\ldots},m}E;F)$). Finally, to fill the gap where the exact polarization formula does not work, one can use Theorem \ref{t2}.
\end{remark}

\subsection*{Acknowledgements}

This study was financed in part by the Coordena\c c\~ao de Aperfei\c coamento de Pessoal de N\'ivel Superior - Brasil (CAPES) - Finance Code 001; and Funda\c c\~ao de Amparo \`a Pesquisa do Estado de Rond\^onia - Brasil (FAPERO) - Grant no. 41/2016.


\begin{thebibliography}{99}                                                                                               %

\bibitem {b}N. Albuquerque, G. Ara\'{u}jo, W. Cavalcante, T. Nogueira, D. N\'{u}\~{n}ez, D. Pellegrino, and P. Rueda, \textit{On
summability of multilinear operators and applications}, Ann. Funct. Anal. 9 (2018), no. 4, 574--590.

\bibitem {a}G. Ara\'{u}jo and D. Pellegrino, \textit{A Gale-Berlekamp
permutation-switching problem in higher dimensions}, European J. Combin. 77 (2019), 17--30.

\bibitem {fa1}G. Botelho, D. Pellegrino, and P. Rueda, \textit{On composition
ideals of multilinear mappings and homogeneous polynomials}, Publ. Res. Inst.
Math. Sci. 43 (2007), 1139--1155.

\bibitem {btv}G. Botelho, E. Torres, and T. Velanga, \textit{Linearization of
multipolynomials and applications}, Arch. Math. 110 (2018), 605--615.

\bibitem {d}W. Cavalcante, D. Pellegrino, and E. Teixeira, \textit{Geoemtry of
multilinear forms}, to appear in Commun. Contemp. Math.

\bibitem {cz}I. Chernega and A. Zagorodnyuk, \textit{Generalization of the
polartization formula for nonhomogeneous polynomials and analytic mappings on
Banach spaces}, Topology 48 (2009), 197--202.

\bibitem {dineen}S. Dineen, \textit{Complex Analysis on Infinite Dimensional
Spaces}, Springer, London, 1999.

\bibitem {c}F. V. C. J\'{u}nior, \textit{The optimal multilinear Bohnenblust--Hille
constants: a computational solution for the real case}, Numer. Funct. Anal.
Optim. 39 (2018), no. 15, 1656--1668.

\bibitem {ry}P. Kirwan and R. Ryan, \textit{Extendibility of homogeneous
polynomials on Banach spaces}, Proc. Amer. Math. Soc. 126 (1998), no.
4, 1023--1029.

\bibitem {Muj}J. Mujica, \textit{Complex Analysis in Banach Spaces}, Dover
Publication, Inc., New York, 2010.

\bibitem {ha1}C. Muscalu and W. Schlag, \textit{Classical and Multilinear
Harmonic Analysis}, Vol. 1, Cambridge studies in advances mathematics 137,
Cambridge University Press, New York, 2013.

\bibitem {fa2}D. Pellegrino and J. Ribeiro, \textit{On multi-ideals and
polynomial ideals of Banach spaces: a new approach to coherence and
compatibility}, Monatsh. Math. 173 (2014), no. 3, 379--415.

\bibitem {pi2}A. Pietsch, \textit{Ideals of multilinear functionals (designs
of a theory)}, Proceedings of the second international conference on operator algebras, ideals, and their applications in theoretical physics (Leipzig, 1983), 185--199, Teubner-Texte Math., 67, Teubner, Leipzig, 1984.

\bibitem {pi}A. Pietsch, \textit{Operator Ideals}, North-Holland, Amsterdam, 1980.

\bibitem {vs}J. Santos and T. Velanga, \textit{On the Bohnenblust--Hille
inequality for multilinear forms}, Results Math. 72 (2017), 239--244.

\bibitem {velanga}T. Velanga, \textit{Ideals of polynomials between Banach
spaces revisited}, Linear Multilinear Algebra 66 (2018), 2328--2348.

\end{thebibliography}
\end{document}